\theoremstyle{plain}
\newtheorem{theorem}{Theorem}[section] 
\newtheorem{prop}[theorem]{Proposition}
\newtheorem{corollary}[theorem]{Corollary}
\newtheorem{algorithm}[theorem]{Algorithm}
\title{Rules for Folding Polyminoes from One Level to Two Levels}
\author{Julia Martin}
\address{Julia Martin, Mathematics Department, Oswego State University of New York, Oswego, New York}
\email{\href{mailto:jmartin7@oswego.edu}{jmartin7@oswego.edu}}
\author{Elizabeth Wilcox}
\address{Elizabeth Wilcox, Mathematics Department, Oswego State University of New York, Oswego, New York}
\email{\href{mailto:elizabeth.wilcox@oswego.edu}{elizabeth.wilco@oswego.edu}}
\dedicatory{Dedicated to Lunch Clubbers Mark Elmer, Scott Preston, Amy Hannahan, and Max Robertson}
\begin{document}

\begin{abstract} Polyominoes have been the focus of many recreational and research investigations.  In this article, the authors investigate whether a paper cutout of a polyomino can be folded to produce a second polyomino in the same shape as the original, but now with two layers of paper.  For the folding, only ``corner folds'' and ``half edge cuts'' are allowed, unless the polyomino forms a closed loop, in which case one is allowed to completely cut two squares in the polyomino apart.  With this set of allowable moves, the authors present algorithms for folding different types of polyominoes and prove that certain polyominoes can successfully be folded to two layers.  The authors also establish that other polyominoes cannot be folded to two layers if only these moves are allowed. {\bf Keywords:} Polyominoes (MSC2010 05B50), Recreational Mathematics (MSC2010 00A08).\end{abstract}

\maketitle


A {\it polyomino} is a geometric figure formed by joining one or more squares of equal side length together, edge-to-edge.  The most familiar polyominoes are dominoes, formed by joining two squares together, and the tetrominoes used in the popular game Tetris.  Researchers (including those engaging in mathematical recreation!) have enjoyed studying polyominoes in relation to tiling the plane \cite{Golomb, Martin}, game play in Tetris \cite{brzu1992, burgi1997}, dissecting geometric figures \cite{Frederickson}, and a whole host of other fascinating problems.  

This current investigation addresses the question of how to fold a paper cut-out of a polyomino, using prescribed allowable folds, so that the resulting shape is exactly the same as the original but now with exactly two layers (``levels'') of paper over the entirety of the polyomino.  If one can succeed in this endeavor with a particular polyomino, it is said that the polyomino can be {\it folded from one level to two levels}, or {\it folded} for short.  This question was originally brought up in Frederickson's ``Folding Polyominoes from One Level to Two'' \cite{fred2011} from 2011.  Frederickson showed several solutions and introduced readers to the terminology of the field; here the authors give explicit algorithms for how to fold certain polyominoes and establish some of the theory dictating which polyominoes can be folded from one level to two levels.

In the language introduced by Frederickson, a {\it HV-square} is a single square in a polyomino that is attached to at least one square along a vertical edge and to at least one square along a horizontal edge. A {\it well-formed polyomino} has no adjacent HV-squares. By contrast, a {\it non-well-formed polyomino} has two or more adjacent HV-squares and a polyomino with no HV-squares at all is called a {\it chain polyomino}. A chain of squares attached to a HV-square is called an {\it appendage}.  These concepts are illustrated in Figure~\ref{vocab}.  The number of squares in a chain polyomino is referred to as its {\it length}.
\begin{figure}[H]
\centering
\includegraphics[width=5.5in]{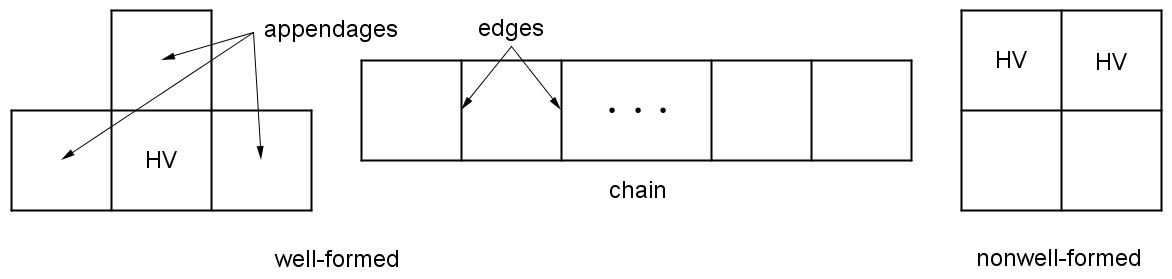}
\caption{}\label{vocab}
\end{figure}
The word {\it genus} refers to the number of holes in a polyomino, following the usage by topologists.  All of the polyominoes shown in Figure~\ref{vocab} are of genus 0 and the folding of such well-formed polyominoes is addressed in Section~\ref{intro}.  Polyominoes of larger genus are addressed in Section~\ref{largergenus}.

Frederickson and others were particularly interested in the figures that arise from ``dissecting'' a polyomino at the fold creases.  These figures are called {\it dissections}.  In this paper, we refer specifically to four main types of dissections:  corner triangle, middle square with adjacent corner triangle, skewed parallelogram, and right triangle. Figure~\ref{wfp} illustrates these dissections.

\begin{figure}[H]
\centering
\includegraphics[width=3.5in]{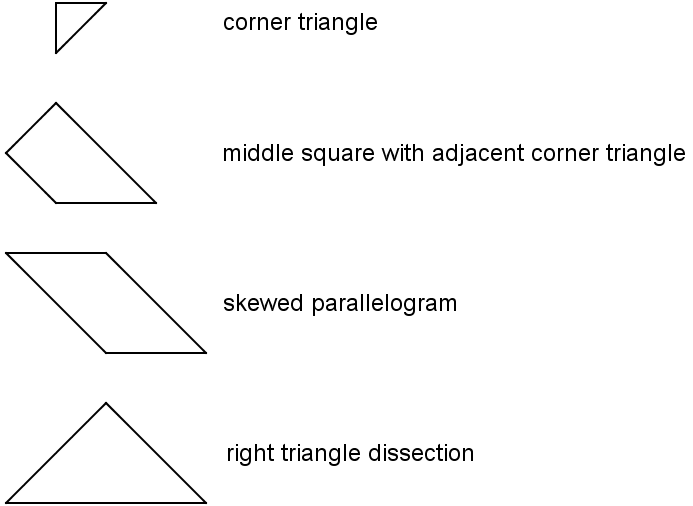}
\caption{}\label{wfp}
\end{figure}

From the dissections shown in Figure~\ref{wfp}, one can deduce the type of fold that we employ.  A {\it corner fold} can be performed on any corner of a square. Mark the midpoints of two consecutive sides and fold the corner of the square inwards, towards the center of the square.  Corner folds can be extended across an edge to a second adjacent square.  The reader may find it easiest to follow along with paper if after every fold, the polyomino is reoriented so that the next fold is created by {\it folding away}.  

{\it Half edge cuts} are also allowed when working with well-formed polyominoes, and in fact necessary when not dealing with chain polyominoes.  In this move, one cuts halfway along one edge of a square in the polyomino.  Remember that this is literally folding paper cutouts of polyominoes so performing half edge cuts in the middle of a block of HV-squares is not reasonable.  Additionally, the only time we cut an entire edge is when folding a polyomino of genus greater than 0.


\section{Well-Formed Polyominoes of Genus 0}\label{intro}

The easiest polyomino to fold is, without contest, a chain polyomino of any length.

\begin{prop} \label{propchain} Let $n$ be an integer, at least 2.  A chain polyomino of length $n$ can be folded from one level to two.  Moreover, the dissections will be $4$ corner triangles, $2$ middle squares with adjacent corner triangle, and $n-2$ skewed parallelograms. \end{prop}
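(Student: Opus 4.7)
Since a chain polyomino has no HV-squares and any bend would necessarily create one, a chain polyomino of length $n$ must be a straight $1\times n$ strip. Orient it horizontally with squares $S_1,\dots,S_n$ labeled left to right. My plan is to exhibit an explicit folding algorithm and then verify that it produces a two-level folding whose dissection matches the enumeration in the statement.

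I would set up the crease pattern as follows. On each of the end squares $S_1$ and $S_n$, I place two corner folds at the two outer corners (those touching the short sides of the strip); these account for the $4$ corner triangles. From each of these corner folds I extend a $45^\circ$ crease across the edge into the adjacent square ($S_2$ from the left, $S_{n-1}$ from the right), which carves out the $2$ pieces of type ``middle square with adjacent corner triangle.'' Across each of the remaining interior squares I place a pair of $\pm 45^\circ$ diagonal creases that dissect the square into a skewed parallelogram (with the complementary triangles absorbed into the neighbouring dissection pieces), producing the $n-2$ skewed parallelograms. Half-edge cuts are placed along the boundary of the strip at the joins where two flaps must pass around one another during folding.

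To verify that the algorithm succeeds, I would induct on $n$. The base case $n=2$ is a direct check: the four corner folds together with the two middle pieces complete the fold and yield two layers across the entire domino. For the inductive step, moving from a chain of length $n-1$ to one of length $n$ amounts to inserting one interior square into the middle of the strip and translating the zigzag $\pm 45^\circ$ crease pattern through it. This insertion adds exactly one skewed parallelogram to the dissection while preserving every previously verified incidence of the crease pattern, and by the translational symmetry of the $\pm 45^\circ$ grid the two-layer structure is maintained.

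I expect the main obstacle to be \emph{physical realizability}: one must confirm that the creases can actually be carried out in some legal order so that the paper folds flat without flaps interpenetrating. Reorienting the polyomino after each step so that the next crease is a ``fold away'' (as suggested in the excerpt), together with the half-edge cuts that allow flaps to pass around one another, are the tools I would use to overcome this. Once flat-foldability has been checked, the enumeration of dissection pieces follows by direct inspection of the crease pattern.
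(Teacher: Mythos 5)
Your overall strategy---write down an explicit crease pattern, verify a base case, and induct by inserting interior squares---matches the paper's direct construction in spirit, and your preliminary observations (a chain must be a straight $1\times n$ strip; the four corner folds live at the outer corners of the two end squares) are correct. The difficulty is with the interior crease pattern itself. In the correct folding there is exactly \emph{one} diagonal crease crossing each of the $n-1$ internal edges of the strip, and all $n-1$ of these creases are \emph{mutually parallel}; the only freedom is whether they are all parallel to the upper or to the lower corner fold of square $1$, which is exactly why there are only two foldings. Your description instead extends a crease into square $2$ ``from each of'' the two corner folds of square $1$, and places ``a pair of $\pm45^\circ$ diagonal creases'' in every interior square as part of a ``zigzag'' pattern. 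Read literally, this puts two mutually perpendicular diagonal creases through an interior square, and that configuration does not yield a skewed parallelogram: it yields the right-triangle dissection, which is precisely the obstruction of Proposition~\ref{propobs} (the square carrying it becomes an HV-square after folding, so the result is no longer a chain). A pair of creases can only bound a parallelogram if they have the \emph{same} slope, so as stated your crease pattern is internally inconsistent, and the perpendicular reading is the one that fails. You need to say explicitly that the two crease segments meeting each interior square are parallel, so that the $n-1$ parallel creases cut out the $n-2$ skewed parallelograms between consecutive ones.

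Two smaller points. First, no half edge cuts are needed here: the paper folds a chain with corner folds alone (cuts only become necessary once HV-squares appear), and introducing cuts would have to be reconciled with the claimed count of dissection pieces. Second, once the creases are all parallel your inductive step is sound---inserting one interior square translates the pattern and contributes one more parallelogram---but the two-layer property still needs one concrete verification; the useful observation is that the composition of the reflections in two consecutive parallel creases is a translation, which is what keeps each successive piece laying down a second layer over the next square of the folded (45$^\circ$-rotated, half-area) chain rather than stacking everything in one place.
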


\begin{proof} We want to show that chain polyominoes can be folded using three of the shape dissections of well-formed polyominoes seen in Figure~\ref{wfp}. Let a chain polyomino of length $n$ be given, as in Figure~\ref{chain}, with the squares labeled from 1 to $n$. 
\begin{figure}[H]
\centering
\includegraphics[width=3.75in]{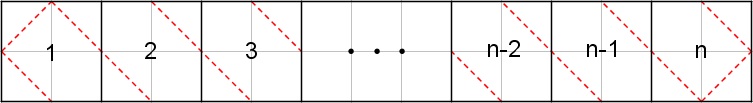}
\caption{chain of length $n$}\label{chain}
\end{figure}
Then in square 1, make two corner folds. Make a fold across the edge between square 1 and square 2 that is parallel to one of the corner folds. Repeat that fold between square $i$ and square $(i+1)$ until the folds cross into square $n$. Finish with the last two corner folds in square $n$. Therefore, by using four corner triangle dissections, two middle square with adjacent corner triangle dissections, and $n-2$ skewed parallelogram dissections, a chain polyomino of length $n$ can be folded from one level to two levels. \end{proof}

Note that there are two distinct ways to fold a chain polyomino because the ``folder'' makes a choice after the first step - there are two possible ways to make create a fold across square 1 and square 2 that is parallel to one of the corner folds in square 1.
\begin{figure}[H]
\centering
\includegraphics[width=4in]{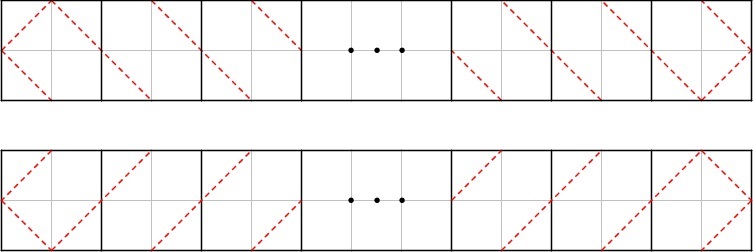}
\caption{alternative folds for chain}\label{2chains}
\end{figure}
%


\begin{prop} \label{propobs} Let $n$ be an integer, at least 2.  For a chain of length $n$, two perpendicular folds in any one of squares $2$ through $(n-1)$ (i.e., a right triangle dissection) is an obstruction to folding the polyomino from one level to two. 
\end{prop}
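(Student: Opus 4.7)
I plan to argue by contradiction: assume a chain polyomino of length $n \ge 2$ is folded to two levels, with a right triangle dissection in some interior square $i$, where $2 \le i \le n-1$. Since chain polyominoes contain no HV-squares, any bend would produce one; hence the chain is a straight row of unit squares, so square $i$ shares opposite edges (say left and right) with its neighbors $i-1$ and $i+1$, while the top and bottom edges of $i$ lie on the boundary of the polyomino. The right triangle dissection forces two perpendicular corner-fold creases in $i$, and since each corner-fold crease has slope $\pm 1$ in local coordinates and runs between midpoints of adjacent edges, the two creases must have slopes $+1$ and $-1$. Enumerating the four possible perpendicular pairings of midpoint-to-midpoint diagonals of a unit square shows that any such pair meets at the midpoint of one of $i$'s edges, with its three distinct endpoints lying on midpoints of exactly three of the four edges of $i$. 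Hence exactly one edge $e$ of $i$ is untouched by either crease.

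The argument then splits into cases based on the location of $e$. In the first case, $e$ is a shared edge, say with $i+1$; then no crease crosses the edge between $i$ and $i+1$, so no paper can fold between the sub-chains $\{1,\dots,i\}$ and $\{i+1,\dots,n\}$. Consequently the only possible contributions to a second layer above $i$ are the corner-fold flaps inside $i$ (each a right triangle of area $1/8$) together with any flap from an extended fold originating at the midpoint of $i$'s shared left edge. Since every such flap is an area-$1/8$ right triangle with apex at the midpoint of an edge of $i$ other than $e$, a sufficiently small neighborhood of the midpoint of $e$ lies outside all flaps and therefore has only one layer --- contradicting the two-level hypothesis. In the second case, $e$ is an exterior edge, and the two perpendicular creases form a ``V'' whose vertex lies at the midpoint of the exterior edge opposite $e$, with endpoints on both shared edges of $i$; following these creases into $i-1$ and $i+1$, each extension is a single corner fold that terminates at an exterior-edge midpoint of its respective neighbor and hence does not propagate onward to $i-2$ or $i+2$. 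This reduces the situation at the boundary between $i-1$ and $i-2$ (and symmetrically at that between $i+1$ and $i+2$) to the first case, so the same layer-deficit contradiction applies; when $i = 2$ or $i = n-1$, Proposition~\ref{propchain} additionally constrains the endpoint square to a specific two-corner-fold configuration, which can be shown incompatible with receiving the V-extension from $i$.

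The main obstacle will be the layer-counting step in the first case: rigorously verifying that the full collection of possible second-layer contributions to $i$ --- over all choices of corner-fold flaps in $i$ and extended-fold flaps entering $i$ from $i-1$ --- misses a positive-area neighborhood of the midpoint of $e$. I expect to handle this via a short enumeration: at most four distinct corner-fold flaps can sit in $i$, each a right triangle of area $1/8$ whose apex is a midpoint of an edge of $i$; and at most two extended-fold flaps can enter $i$ across its non-free shared edge, each likewise an area-$1/8$ right triangle with apex at that shared edge's midpoint. None of these apexes coincides with the midpoint of $e$, and the slope-$\pm 1$ leg restriction confines each flap well away from $e$, so their union avoids a small disk about the midpoint of $e$ and the desired single-layer point is produced.
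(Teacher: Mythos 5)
Your proposal attacks the wrong target, and the layer-counting claim at its heart is false in the principal case. The paper's own argument is a one-line shape argument: folding along the two perpendicular creases reflects each sub-chain through a line of slope $\pm 1$, so the two arms of the chain on either side of square $i$ end up perpendicular to one another; the folded figure therefore contains an HV-square where the original chain has none, i.e., it is an $L$ (or $T$) shape rather than a chain. The fold is an obstruction because the \emph{output has the wrong shape}, not because some region is left singly covered. Indeed, Corollary~\ref{corobs} and Proposition~\ref{propoddobs} use exactly this dissection as the \emph{productive} way to create an HV-square: with the right-angle vertex at the midpoint of an exterior edge of square $i$ (your second case), one continues with parallel folds down each arm and obtains a perfectly good two-level polyomino, doubly covered everywhere, that happens to be $L$-shaped. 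So the singly-covered neighborhood you are trying to exhibit need not exist.

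The specific step that fails is your reduction of Case 2 to Case 1. You assume the only creases present are the two perpendicular ones and their one-square extensions, and conclude that no crease crosses the edge between $i-1$ and $i-2$; but the proposition hypothesizes only that square $i$ carries a right triangle dissection, and the folder is free (and, in order to flatten the arms, obliged) to place further creases in squares $i-1, i-2, \ldots$, which defeats the reduction. A secondary problem is the flap inventory in Case 1: a corner fold extended across an edge does not carry an area-$1/8$ triangle, it carries the entire sub-chain beyond the crease, so bounding the second-layer contributions over square $i$ by six small triangles is unjustified. The repair is to abandon layer counting altogether: observe that reflecting the sub-chain $\{1, \ldots, i-1\}$ (respectively $\{i+1, \ldots, n\}$) across a crease of slope $\pm 1$ sends its horizontal axis to a vertical one, so the two folded arms meet at right angles at the image of square $i$; the resulting two-level polyomino therefore has an HV-square and cannot be a chain.
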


For example, suppose a chain of length $n$ is folded with two perpendicular folds in one of the squares besides square $1$ and $n$, as in Figure~\ref{obs}.
\begin{figure}[H]
\centering
\includegraphics[width=3.75in]{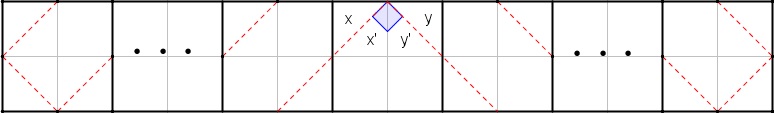}
\caption{two perpendicular folds in a middle square}\label{obs}
\end{figure}

\noindent
The two-level polyomino is not a chain because the resulting square overlaying the right triangle dissection will be an HV-square. Hence, two perpendicular folds in any one of squares $2$ through $(n-1)$ is an obstruction to folding a chain polyomino from one level to two levels.

\begin{corollary} \label{corobs} To introduce an HV-square with two appendages while folding a chain from one level to two, make two perpendicular corner folds across the edges of a square (i.e., make a right triangle dissection) that is not at the end of a chain. \end{corollary}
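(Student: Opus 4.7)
The plan is to obtain this corollary as an almost immediate consequence of Proposition~\ref{propobs}, essentially by reading the obstruction argument in reverse. Proposition~\ref{propobs} already identified exactly what happens when two perpendicular corner folds are made in a non-endpoint square of a chain: the resulting square on the upper level, sitting over the right triangle dissection, becomes an HV-square. The corollary simply reinterprets this as a constructive recipe rather than as a prohibition.

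First I would fix a chain polyomino of length $n$, label squares $1$ through $n$ as in Figure~\ref{chain}, and pick a square $k$ with $2 \le k \le n-1$ in which to make the right triangle dissection. I would then describe the folding protocol on either side of square $k$: on the sub-chain consisting of squares $1$ through $k$, apply the chain-folding procedure from Proposition~\ref{propchain}, but with the right triangle dissection replacing one of the usual skewed-parallelogram folds at square $k$; symmetrically on squares $k$ through $n$. This places the two half-chains on top of square $k$ meeting at perpendicular folds.

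Next I would track the combinatorial structure of the result. Because the two creases at square $k$ are perpendicular, the two folded half-chains approach square $k$ from orthogonal directions. The doubled square sitting above square $k$ therefore has one folded half-chain extending along a horizontal edge and the other along a vertical edge. By the definition of HV-square stated at the start of the paper, this doubled square is an HV-square, and the two folded half-chains are precisely appendages attached to it in the sense introduced alongside Figure~\ref{vocab}.

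The conclusion then follows: the construction produces an HV-square with two appendages in the folded two-level polyomino, and by Proposition~\ref{propobs} the perpendicular-folds move is the move responsible for introducing the HV-square in the first place, so it is indeed the way to introduce an HV-square with two appendages via folding a chain. The only mildly delicate point, and the one I would be most careful about, is checking that the two half-chains really do land as well-defined appendages on distinct perpendicular edges of the doubled square rather than overlapping each other on the same edge; this is controlled by the choice of which two perpendicular edges of square $k$ the corner folds are made across, and matches the pictorial situation already shown in Figure~\ref{obs}.
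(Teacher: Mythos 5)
Your proposal is correct and matches the paper's reasoning: the paper derives this corollary directly from the discussion following Proposition~\ref{propobs}, where the square overlaying the right triangle dissection is observed to become an HV-square with the two folded portions of the chain as its appendages. Your version simply spells out the same reverse-reading of the obstruction argument in more detail.
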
 

Because there are no other types of folds allowed, the only possible ways to fold squares in the middle of a chain are using parallel folds across the edges (giving skew parallelogram dissections) or making two perpendicular corner folds (giving a right triangle dissection).  Thus Proposition~\ref{propobs} tells us that the only way a chain can be folded is the ways described by Proposition~\ref{propchain}, yielding the first theorem of this section:

\begin{theorem} \label{thmobs} There are exactly two ways to fold a chain polyomino from one level to two levels. \end{theorem}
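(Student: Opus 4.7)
The strategy is to split the proof into an existence claim (at least two distinct foldings exist) and a uniqueness claim (no more than two exist). The existence half is essentially free: Proposition~\ref{propchain} exhibits a valid folding, and the remark immediately following it (illustrated in Figure~\ref{2chains}) shows that the folder has a genuine binary choice when creating the first cross-edge fold between squares $1$ and $2$, and this produces two visibly distinct foldings.

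For uniqueness I would proceed square-by-square along the chain. First I would catalogue the dissections a single square can receive from the allowed moves (corner folds and half edge cuts): the realizable single-square dissections are the four listed in Figure~\ref{wfp}, namely corner triangle, middle square with adjacent corner triangle, skew parallelogram, and right triangle. Of these, the only two that are compatible with a fully doubled interior square in a chain (one with a neighbor on each side, so that folds must enter and exit on opposite edges) are the skew parallelogram and the right triangle. By Proposition~\ref{propobs}, the right-triangle dissection is forbidden at any of squares $2$ through $(n-1)$. Hence every interior square must be dissected as a skew parallelogram, and the two endpoint squares must absorb the remaining unfolded corners; with only the allowed moves, the only way to do this at an endpoint is via two corner triangles together with a middle square with adjacent corner triangle, exactly as in the construction of Proposition~\ref{propchain}.

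The main obstacle is arguing that once the initial corner folds are placed in square $1$, the entire rest of the folding is forced. I would handle this by induction on the square index: the orientation of the fold entering square $i$, together with the requirement that square $i$ be a skew parallelogram, uniquely determines the orientation of the fold exiting into square $i+1$; iterating propagates the pattern deterministically to square $n$, where it meets the (also forced) closing corner folds. The only genuinely free choice in the entire process is the orientation of the very first cross-edge fold between squares $1$ and $2$ — one of two directions parallel to a corner fold in square $1$ — which gives exactly two foldings and completes the proof.
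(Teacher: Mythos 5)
Your proposal follows essentially the same route as the paper: the paper's argument likewise observes that an interior square of the chain can only receive a skew parallelogram or a right triangle dissection, invokes Proposition~\ref{propobs} to exclude the right triangle, and concludes that the folding must be as in Proposition~\ref{propchain}, with the two foldings arising from the binary choice of the first cross-edge fold noted after that proposition. Your version is simply a more explicit write-up (the square-by-square propagation argument is left implicit in the paper), so it is correct and matches the paper's approach.
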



In a well-formed polyomino, there are three possible configurations for a HV-square:  two appendages, three appendages, or four appendages.  These configurations are referred to as an $L$ shape, a $T$ shape, and an $X$ shape, respectively.  We now provide algorithms to successfully fold a polyomino with one of these configurations from one level to two.  Before starting, it's important to notice that by rotating a polyomino, with the HV-square as the center of rotation, there is essentially one orientation for each configuration.

\begin{algorithm}[\textbf{$L$ shape}] \label{algl} Turn the polyomino so there is a right horizontal appendage and an upward vertical appendage. Cut halfway along the edge of the HV-square and the vertical appendage from the right. Make a corner fold in the HV-square. Make a fold across the edge between the HV-square and the horizontal appendage that is perpendicular to the corner fold in the HV-square; from there, fold as needed down the appendage. Fold a corner fold across the square in the vertical appendage  directly next to the HV-square. This corner fold should be parallel to the corner fold in the HV-square. From there, fold as needed down the appendage.
\end{algorithm}
\begin{figure}[H]
\centering
\includegraphics[width=1.5in]{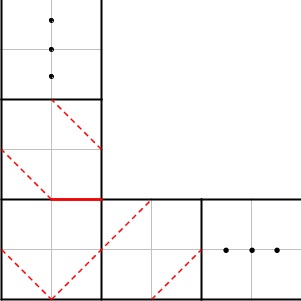}
\caption{cuts and folds for an $L$ shape}\label{L}
\end{figure}

\begin{algorithm}[\textbf{$T$ shape}] \label{algt} Orient as in Figure~\ref{T}. Make two cuts: first cut halfway along the edge of the HV-square and the upper vertical appendage from the right, then make another cut halfway along the edge between the HV-square and the right horizontal appendage from below. Fold across the edge between the HV-square and the square below, proceed to fold down the appendage as needed. As with the $L$ shape, make a corner fold on the horizontal appendage square that, if extended into the HV-square, would be perpendicular to the HV-square corner fold. Fold the upper vertical appendage as in $L$ shape algorithm. \end{algorithm}
\begin{figure}[H]
\centering
\includegraphics[width=1.4in]{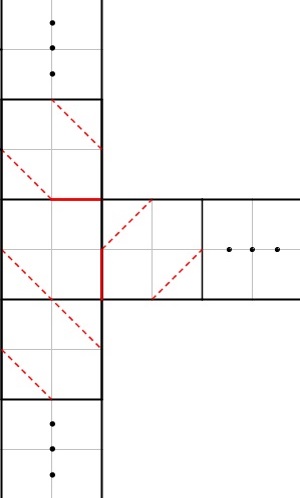}
\caption{cuts and folds for a $T$ shape}\label{T}
\end{figure}
\begin{algorithm}[\textbf{$X$ shape}] \label{algx} Cut halfway along the edge between the HV-square and the upward vertical appendage from the right, turn the polyomino 90$^\circ$ and repeat the cut; do this twice more until there is a cut between the HV-square and all appendages. Then make a corner fold in the square directly next to the HV-square in the vertical appendage, but do not fold into the HV-square. Proceed down the appendage as needed. Rotate the polyomino 90$^\circ$ and repeat the same fold three more times in each appendage and fold down each appendage as needed.
\end{algorithm}
\begin{figure}[H]
\centering
\includegraphics[width=2.5in]{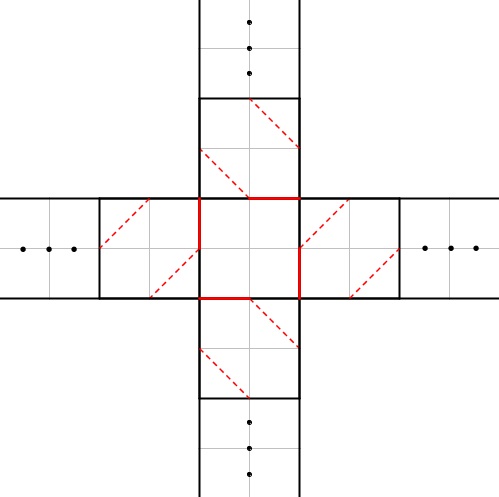}
\caption{cuts and folds for an $X$ shape}\label{X}
\end{figure}

An $X$ shape with the appendages not continuing beyond a single square has also been referred to as a {\it Greek Cross} (see \cite[p. 266]{fred2011}).  

These three algorithms provide one way to fold well-formed polyominoes with genus $0$ and one HV-square. However each algorithm could be performed from the perspective of a ``mirror image". For example, one could cut and fold the $L$ shape in Figure~\ref{L} according to a diagram in Figure~\ref{rL}, which is simply the original algorithm but reflected across the 45$^\circ$ diagonal of the HV-square. 
\begin{figure}[H]
\centering
\includegraphics[width=1.5in]{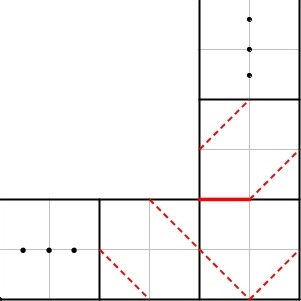}
\caption{alternative cuts and folds for an $L$ shape}\label{rL}
\end{figure}


When the polyomino is well-formed, there is at least one square between each pair of consecutive HV-squares and this provides enough space for the algorithms to be performed consecutively. The question is whether an appendage extending to the left will continue to left after folding, or if it will now extend to the right of its HV-square. The algorithms for HV-squares all preserve the orientation of the appendages relative to one another, and do not produce a ``mirror image'' of the original HV-square.  However, the folds and cuts for the HV-square are not the only orientation-changing maneuvers.

The squares between a pair of consecutive HV-squares are essentially a chain and are folded using the method of Proposition~\ref{propchain}.  Each corner fold that extends across adjacent squares flips the appendage $180^{\circ}$.  If there is an odd number of squares between consecutive HV-squares then there will be an even number of such folds, for no net change in orientation.  If there is an even number of squares between consecutive HV-squares then there is an odd number of folds across adjacent squares, and the orientation at the second HV-square will be incorrect.  Since the algorithms presented here for folding $X$ shapes, $L$ shapes, and $T$ shapes {\it all} preserve orientation, there's no way to correctly fold a polyomino with an even number of squares between a pair of consecutive HV-squares using only corner folds and half edge cuts.


Other algorithms may exist to fold the three configurations for HV-squares in well-formed polyominoes, leading to a solution to the problem of an even number of squares between a pair of consecutive HV-squares. However, these algorithms do lead to the conclusion:

\begin{theorem} \label{thmgenus0} Any well-formed polyomino of genus 0 with an odd number of squares between each pair of consecutive HV-squares can be folded from one level to two levels with only corner folds and half edge cuts.  A well-formed polyomino of genus 0 with an even number of squares between any pair of consecutive HV-squares cannot be folded from one level to two with only corner folds and half edge cuts.
\end{theorem}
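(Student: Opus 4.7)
The plan is to split the proof into the two implications and use the parity-of-orientation analysis that precedes the theorem. For the positive direction, I would argue by a direct construction: since the polyomino is well-formed, no two HV-squares are adjacent, so every HV-square is one of an $L$, $T$, or $X$ shape and can be handled independently by Algorithm~\ref{algl}, \ref{algt}, or \ref{algx}. Between any two consecutive HV-squares there is a chain segment whose length equals the given spacing, and this segment can be folded by the method of Proposition~\ref{propchain}. One must verify that the individual local foldings fit together globally; by the discussion immediately preceding the theorem, each cross-edge corner fold along the connecting chain flips the local appendage by $180^{\circ}$, while each of the $L/T/X$ algorithms preserves the relative arrangement of the appendages around its HV-square. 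When the spacing is odd, the number of cross-edge flips along the chain is even, so the incoming orientation expected by the next HV-square's algorithm matches the outgoing orientation produced by the previous one, and the construction closes up.

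For the negative direction, my approach is a parity invariant. Assign to each square an orientation label in $\{+1,-1\}$ (say, the sign of the local coordinate frame after folding). A corner fold entirely inside one square does not change the label of any other square, and a half edge cut does not affect any label either---it only permits flaps to move independently. The only move that toggles a label is a corner fold extended across an edge between two squares, which reflects one side of the polyomino across the shared edge and therefore multiplies the labels on that side by $-1$. Thus along any path of $m$ edges in the polyomino, the net change in relative orientation between the endpoints equals the parity of the number of cross-edge folds used on that path. At each HV-square, any legal folding must restore all of its surrounding appendages to the correct configuration of the target two-level polyomino, which fixes the required relative orientation at both endpoints of every inter-HV-square chain. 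When the spacing between two consecutive HV-squares is even, a case analysis of the four available fold patterns for each intermediate chain square (two parallel cross-edge folds, two perpendicular cross-edge folds, or the end-of-chain corner folds) combined with Proposition~\ref{propobs} shows that the only consistent choice forces an odd number of cross-edge flips, producing the wrong sign and hence an unachievable target.

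The main obstacle will be making the negative direction fully rigorous, since the preceding paragraph of the paper explicitly leaves open the possibility that ``other algorithms may exist'' at each HV-square. To close this gap I would enumerate the moves available around a single HV-square and show that, once the polyomino is required to fold to two layers with no excess paper, the relative cyclic order of the appendages around the HV-square is forced; combined with the edge-by-edge sign-flip argument above, this upgrades the heuristic parity count into a genuine invariant. The positive direction, by contrast, is essentially bookkeeping once Algorithms~\ref{algl}--\ref{algx} and Proposition~\ref{propchain} are in hand, and its only subtlety is checking that the half edge cuts required at different HV-squares do not interfere, which is guaranteed by the well-formed hypothesis that places at least one intermediate square between any two HV-squares.
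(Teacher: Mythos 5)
Your proposal follows essentially the same route as the paper: the positive direction is exactly the paper's argument (apply Algorithms~\ref{algl}--\ref{algx} at each HV-square, fold the intervening chains by Proposition~\ref{propchain}, and observe that an odd spacing gives an even number of cross-edge flips so orientations match up), and the negative direction is the same $180^{\circ}$-flip parity count. One point worth making explicit: the obstacle you flag in the negative direction --- that one must rule out \emph{all} possible foldings at an HV-square, not just the three presented algorithms --- is real, and the paper does not close it either; the text immediately before the theorem concedes that ``other algorithms may exist \ldots leading to a solution to the problem of an even number of squares,'' so the paper's argument only shows that its own algorithms fail in the even case, which falls short of the impossibility asserted in the second sentence of the theorem. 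Your proposed fix (an orientation-sign invariant plus an enumeration showing the cyclic order of appendages around an HV-square is forced) is a sensible plan for upgrading the heuristic to a proof, but as written it is a plan rather than a proof; carrying out that enumeration would actually put the theorem on firmer ground than the paper does.
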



\section{Well-Formed Polyominoes of Genus $>$ 0}\label{largergenus}

\subsection{Genus At Least 1}

\begin{prop}\label{propgenus1} Let a closed polyomnio have no appendages and genus 1 with a rectangular hole. If each side has odd length then the polyomino can be folded to 2 levels.
\end{prop}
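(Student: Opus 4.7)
The plan is to use the special move permitted for closed loops, namely a single complete cut along one shared edge between two adjacent squares. Because the polyomino has genus $1$, no appendages, and a rectangular hole, it is just a rectangular frame whose only HV-squares are the four corner squares, each in the $L$-shape configuration (one horizontal and one vertical appendage).

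First I would completely cut apart two adjacent non-corner squares lying in one chosen side of the frame. After this single cut, the polyomino becomes a ``C''-shape: a well-formed polyomino of genus $0$. Its four HV-squares are still the four original corners, each still sitting in an $L$-configuration. Between three of the four consecutive pairs of corners lies one of the three uncut sides, each of odd length by hypothesis; the cut side is split into two ``free ends,'' one hanging off each of the two corners nearest the cut, and these free ends lie outside of any consecutive-HV-square chain and therefore have no parity requirement.

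This puts the polyomino exactly in the setting of Theorem~\ref{thmgenus0}: a well-formed polyomino of genus $0$ in which every pair of consecutive HV-squares is separated by an odd number of squares. I would then execute that theorem's argument explicitly on the ``C''-shape: apply Algorithm~\ref{algl} at each of the four $L$-corners, fold each odd-length intermediate straight chain by Proposition~\ref{propchain}, and treat the two free ends as extensions of the chain folds emanating from the two outermost corners.

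The main obstacle is orientation compatibility. The algorithms for $L$-HV-squares and for chains each impose a specific orientation at the next HV-square, and a corner fold that crosses an edge flips an appendage by $180^\circ$, so an even-length intermediate chain would leave the following corner in the wrong orientation. Checking that the odd-length hypothesis on each of the three uncut sides exactly cancels these flips --- and that the two free ends, being terminal, never need to match orientation against anything --- is the one place where the argument could fail, and it is what forces the odd-length hypothesis to be placed on \emph{every} side. Once that compatibility is confirmed, the folding can be carried out in sequence around the ``C'' and the result follows.
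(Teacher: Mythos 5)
Your proposal matches the paper's proof in all essentials: one full edge cut opens the frame into a genus-0 well-formed polyomino, which is then folded with the chain algorithm along the sides and the $L$-shape algorithm at the corners, with the odd side lengths supplying the orientation parity that lets the two cut ends rejoin. The only differences are cosmetic --- the paper places the cut next to a corner rather than mid-side (and remarks that the cut's location is immaterial; note your mid-side prescription would need that adjustment for a $3\times 3$ frame), and it re-derives the orientation count explicitly instead of citing Theorem~\ref{thmgenus0}.
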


\begin{figure}[H]
\centering
\includegraphics[width=2.75in]{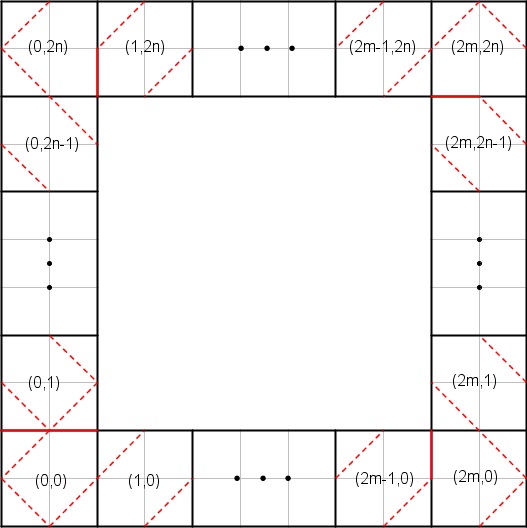}
\caption{}\label{genus1}
\end{figure}

\begin{proof} First, cut the edge between square $(0,0)$ and square $(0,1)$ in Figure~\ref{genus1}. Make two corner folds on square $(0,0)$. Fold across into square (1,0) parallel to the corner fold adjacent to square $(0,1)$. Fold across square $(1,0)$ through square $(2m-1,0)$ using the chain algorithm. Notice that when you fold as a chain, at the end of the appendage you are left with a half-square and a HV-square in the perfect position for the L-shape algorithm to be applied.
Use the L-shape algorithm to turn the corner. Repeat this procedure until the polyomino is completely folded.

\textit{Why is it that the 2 ends of the folded polyomino meet?} It has to do with the folding on the squares $(2m,y)$ where $0 \leq y \leq 2n$ and squares $(x,2n)$ where $ 1 \leq x \leq 2m$. The squares $(2m,x)$ where $1 \leq x \leq 2m$ form a chain of length $2n+1$ with both square $(2m,2n)$ and square $(2m,0)$ being HV-squares. There will be $2n-1$ complete folds across adjacent squares in the chain; in square $(2m,2n-1)$ we have a corner fold that also flips the appendage $180^{\circ}$. So there are a total of $2n$ folds, each flipping the appendage $180^{\circ}$ for no net change in orientation. Then the two appendages on the HV-squares will be parallel but in the same direction. The same will happen for squares $(x,2n)$ since this is also a chain of odd length. Then the two end cuts will come back together to create a polyomino of genus 1 and 2 levels. \end{proof}

\begin{corollary} \label{corodd} Since the sides of a $k \times j$ polyomino of genus 1 will have $k-1$ folds and $j-1$ folds on respective sides, the only way orientation can be preserved is if $k$ and $j$ are both odd. \end{corollary}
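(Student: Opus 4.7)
The plan is to lean directly on the orientation-tracking argument already used in Theorem~\ref{thmgenus0} and in the proof of Proposition~\ref{propgenus1}, specializing it to the two kinds of straight sides of a $k \times j$ closed polyomino of genus 1. The key fact I would reuse is that each corner fold extended across the shared edge between two adjacent squares of a chain flips the attached segment by $180^\circ$. Consequently, an odd number of such across-edge folds along a chain produces a net reversal of orientation, whereas an even number produces no net change.

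First I would set up coordinates exactly as in Figure~\ref{genus1}: the two horizontal sides of the rectangular annulus consist of $k$ squares each, contributing $k-1$ across-edge folds as the chain algorithm marches along the side, and the two vertical sides consist of $j$ squares, contributing $j-1$ such folds. After the single initial cut opens the annulus into a long strip, each straight side is folded exactly as in Proposition~\ref{propchain}, with the corner HV-squares handled by the $L$-shape algorithm at each turn. I would then argue that in order for the two freshly cut edges to meet again in the correct position and orientation (so that the final figure is a genuine two-layer copy of the original shape, not something reflected), the cumulative orientation change along each side must be trivial, i.e., an even number of $180^\circ$ flips. This translates immediately into the conditions that $k-1$ and $j-1$ both be even, equivalently $k$ and $j$ both odd.

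The main subtlety I anticipate is correctly accounting for the orientation contribution of the four corner HV-squares: as observed in the proof of Proposition~\ref{propgenus1}, each $L$-shape corner fold also contributes a $180^\circ$ flip. The cleanest way to dispose of this is to note that there are four such corner contributions, producing an even number of extra flips overall, and that these corner flips are distributed symmetrically between the horizontal and vertical traversals, so they do not alter the parity conditions on $k-1$ and $j-1$ coming from the chain folds. With this bookkeeping in place, the corollary falls out immediately as the contrapositive of Proposition~\ref{propgenus1}: if either $k$ or $j$ is even, the corresponding side finishes its traversal in the wrong orientation, the two cut ends cannot rejoin, and hence orientation cannot be preserved.
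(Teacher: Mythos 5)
Your proposal is correct and follows essentially the same route as the paper: the paper offers no separate proof of this corollary, deriving it directly from the flip-counting in the proof of Proposition~\ref{propgenus1} (a side of $k$ squares accumulates $k-1$ flips of $180^\circ$, counting the $k-2$ across-edge folds plus the one corner fold at the $L$-shape turn, so orientation is preserved exactly when $k-1$ and $j-1$ are both even). Your extra bookkeeping about the four corner contributions is harmless and consistent with the paper's per-side tally.
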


The folding algorithm does not require that the cut be made where specified; that designation was made only for consistency.  However, the folding algorithm truly {\it does} require the polyomino have an odd number of squares between each consecutive pair of HV-squares.  Consider a well-formed polyomino of genus 1 where the sides are $4 \times 3$, as in Figure~\ref{counterex}.
\begin{figure}[H]
\centering
\includegraphics[width=4in]{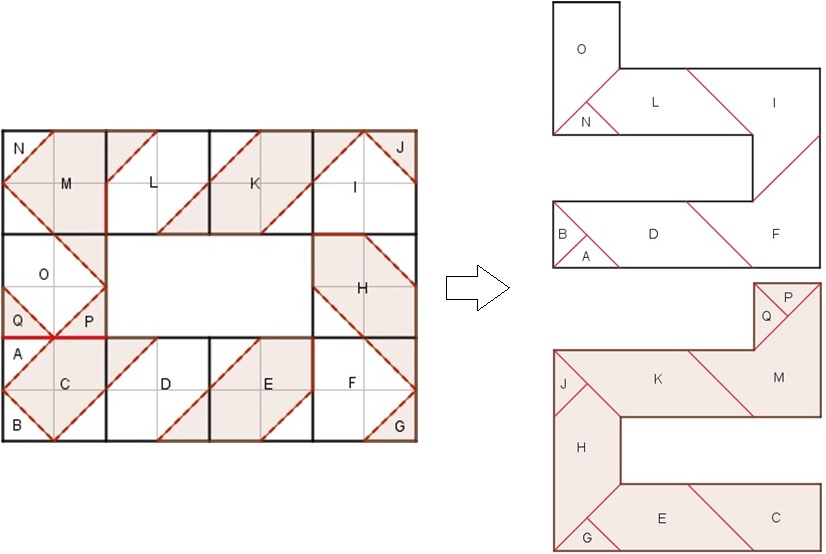}
\caption{}\label{counterex}
\end{figure}
If one orients the polyomino to begin by folding along a side with three squares, the folding seems to work -- it's only when one ``turns the corner'' and starts to work on a side with four squares that trouble strikes.  In Figure~\ref{counterex} we show how the folding will result if the algorithm starts along a side with four squares.  

Having observed that the algorithms created completely fail when there is an even number of squares between pairs of consecutive HV-squares, one might conclude the following: {\it A closed polyomino with no appendages, genus 1, and rectangular hole can be folded to 2 levels if and only if each side has odd length.}

In fact, the algorithm described in Proposition~\ref{propgenus1} is even broader; no part of the proof required that the well-formed polyomino be of genus 1, other than simply dictating a single cut.  In a closed polyomino of genus $g > 0$, one needs to make $g$ cuts, one to connect each ``hole'' to the ``outside'' and then the algorithms for folding do the rest of the work.

\begin{theorem}\label{thmnotrect} A well-formed closed polyomino with no appendages and an odd number of squares between each pair of consecutive HV-squares can be folded from one level to two.\end{theorem}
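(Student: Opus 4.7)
The plan is to extend the cut-and-fold strategy of Proposition~\ref{propgenus1} from a single rectangular hole to an arbitrary closed well-formed polyomino of genus $g \geq 1$ with no appendages. Since the polyomino is closed with no appendages, every HV-square sits at a corner of the frame surrounding some hole, the squares between consecutive HV-squares form chains, and the odd-spacing hypothesis lets the orientation bookkeeping from Proposition~\ref{propgenus1} carry through unchanged.

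First I would make $g$ complete-edge cuts, one per hole, chosen so that each cut either opens a hole directly to the exterior or merges its hole with one that has already been opened; equivalently, the cuts form a spanning-tree-like structure in the dual adjacency graph on the set of holes together with the outside region. After these $g$ cuts the polyomino is simply connected, has no appendages, and still has an odd number of squares between consecutive HV-squares. I would then proceed exactly as in Proposition~\ref{propgenus1}: start at any square adjacent to a cut, fold each chain between consecutive HV-squares via Proposition~\ref{propchain}, and turn each corner via the $L$-shape algorithm (Algorithm~\ref{algl}), continuing until every square of the polyomino has been folded.

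Finally, I would verify that each of the $g$ cuts reseals after folding. This is exactly the orientation argument from Proposition~\ref{propgenus1}: along any odd-length chain of squares between consecutive HV-squares, the number of cross-edge folds is even and so produces no net change in orientation, while the $L$-shape algorithm itself preserves orientation. Any closed loop in the original polyomino decomposes into odd chains joined at HV-squares, so as one traverses around any hole the cumulative orientation change is zero. Consequently the two sides of each cut edge are returned to matching positions and orientations by the folding procedure, and the resulting two-level shape coincides with the original polyomino.

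The main obstacle is the cut-selection step together with showing that a consistent traversal order exists once the cuts are made. In Proposition~\ref{propgenus1} there was essentially no choice to make, but with multiple holes and possibly non-rectangular hole shapes I must show that some choice of $g$ complete-edge cuts produces a simply connected figure in which the chain-traversal visits every square, and that the two-sided cut edges all match up. I expect this to reduce to a standard spanning-tree argument in the dual graph of holes, combined with the already-local nature of the orientation-preservation argument, so that the global closure follows hole-by-hole from the genus-$1$ case.
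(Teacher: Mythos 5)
Your proposal follows essentially the same route as the paper, which justifies the theorem by observing that the argument of Proposition~\ref{propgenus1} needs only one cut per hole to connect it to the outside, after which the chain and HV-square algorithms do the rest; your spanning-tree formulation of the cut selection and your explicit orientation bookkeeping are, if anything, more careful than the paper's one-paragraph sketch. The only small addition needed is that for genus $g>1$ the junction squares where a wall between holes meets the frame can be $T$ or $X$ shapes, so Algorithms~\ref{algt} and~\ref{algx} must be invoked alongside the $L$-shape algorithm when turning corners.
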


\subsection{Another Look at Genus 1}

While only allowing corner folds and half-edge cuts, the authors stumbled upon an alternative method for folding certain closed polyominoes of genus 1 with no appendages from one level to two levels.  This began with the following observation:

\begin{prop} \label{LtoChain}
A 1-level $L$ shape polyomino can be folded into a 2-level chain polyomino. \end{prop}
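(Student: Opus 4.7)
The plan is to prove Proposition~\ref{LtoChain} by giving an explicit folding of the $L$ shape that uses a single right triangle dissection at the HV-square to carry one appendage on top of the other.

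Orient the $L$ so that the HV-square sits at the origin, with a horizontal appendage of length $h$ extending to the right and a vertical appendage of length $v$ extending upward. To free the vertical appendage to pivot relative to the horizontal one, I would first make a half edge cut along one of the edges connecting the HV-square to an appendage, in the style of the opening step of Algorithm~\ref{algl}. Then perform the fold along the diagonal of the HV-square running from its inside corner (where both appendages attach) to the opposite outside corner; this is exactly the right triangle dissection shown in Figure~\ref{wfp}.

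Under reflection across that diagonal, each square $(0,i)$ of the vertical appendage is carried to position $(i,0)$. When $v=h$, every vertical square lands precisely on the corresponding horizontal square, the HV-square folds in half so that its two triangular halves coincide, and the resulting object is a horizontal chain of length $h+1$ covered everywhere by exactly two layers of paper. When $v \neq h$, the excess squares of the longer appendage protrude off the overlap as a single-level tail, and I would then apply the chain algorithm of Proposition~\ref{propchain} along that tail to halve its length and produce a uniform 2-level chain.

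The main obstacle is to verify rigorously that the diagonal crease together with the accompanying half edge cut forms an admissible combination under the rules of corner folds and half edge cuts, and that the three-dimensional motion of the fold does not force the paper to pass through itself. Once admissibility is in hand, the planar reflection across the diagonal is a rigid motion, and the 2-level chain structure of the image follows by direct inspection.
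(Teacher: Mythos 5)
Your central move --- a crease along the full diagonal of the HV-square, from the reflex corner to the opposite outside corner --- is not an allowed fold in this paper. The only folds permitted are corner folds, whose creases run between the \emph{midpoints} of two consecutive sides of a square (see the dissections in Figure~\ref{wfp}); a diagonal through two opposite vertices is a different crease entirely. This matters for more than bookkeeping: a successful fold from one level to two produces a polyomino of the \emph{same shape} at scale $1/\sqrt{2}$, rotated $45^\circ$, which is exactly why every dissection in Figure~\ref{wfp} is bounded by midpoint-to-midpoint segments. Your construction instead stacks the vertical appendage on top of the horizontal one at full scale, and the HV-square ``folded in half so that its two triangular halves coincide'' leaves a two-layer \emph{triangular} end protruding from the stack --- the image is not a polyomino, let alone a 2-level chain. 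The subsequent claim that the chain algorithm ``halves the length'' of the single-level tail is also not what Proposition~\ref{propchain} does.

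The paper's proof needs no cut and no diagonal: it places a single corner fold at the outside (bottom-left) corner of the HV-square, producing a right triangle dissection there --- this is precisely Corollary~\ref{corobs} run in reverse, converting the HV-square into an ordinary chain square rather than introducing one --- and then propagates parallel corner folds down each appendage as in the chain algorithm. The absence of any cut is not incidental: Proposition~\ref{propoddobs} relies on this proposition specifically to fold a $(2n+1)\times(2m+1)$ genus-1 polyomino \emph{without} cuts, so the half edge cut you introduce in your first step would defeat the purpose even if the rest of your argument went through. To repair your write-up, replace the diagonal crease and the cut with the corner fold at the HV-square's outer corner and check that the resulting creases tile the $L$ by corner triangles, skewed parallelograms, and one right triangle dissection.
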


\begin{proof} Orient the $L$ shape polyomino as in the $L$ shape algorithm and fold the bottom left corner of the HV-square using a corner fold as in Figure~\ref{obs2}.  Fold the bottom left corner in the square above the HV-square; follow the fold into the HV-square top right corner and the square adjacent to the right bottom left corner. Make two more parallel folds on each appendage, then finish the appendages by following the chain algorithm. \end{proof}

\begin{figure}[H]
\centering
\includegraphics[width=2.3in]{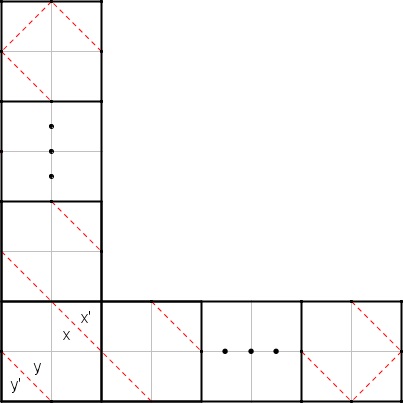}
\caption{}\label{obs2}
\end{figure}

In the situation of a well-formed polyomino of genus 0, the result of Proposition~\ref{LtoChain} is not particularly useful.  However, when considering well-formed polyominoes of genus 1 with a rectangular hole, the procedure described in Proposition~\ref{LtoChain} suddenly opens a door for folding without the use of any cuts.

\begin{prop} \label{propoddobs}
There is an alternative way of folding a $(2n+1) \times (2m+1)$ well-formed polyomino of genus 1 without needing any cuts. \end{prop}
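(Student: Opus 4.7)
The plan is to use the L-to-chain folding of Proposition~\ref{LtoChain} at each of the four HV-corner squares of the $(2n+1) \times (2m+1)$ rectangular ring, with chain folds between corners via Proposition~\ref{propchain}. Unlike the algorithm of Proposition~\ref{propgenus1}, which opens the loop with one half-edge cut, the L-to-chain procedure handles each corner using only corner folds and no cuts whatsoever, so the entire ring can in principle be folded without ever cutting.

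Concretely, I would first orient the ring so that one HV-corner sits in the L-configuration of Algorithm~\ref{algl}, then apply the L-to-chain procedure of Proposition~\ref{LtoChain} at that corner. Walking along the adjacent side of length $2n+1$, I would fold the $2n-1$ interior squares using the chain algorithm, which preserves orientation between consecutive HV-squares because the interior chain length is odd. At the next HV-corner I would apply L-to-chain again (possibly in the mirrored variant), proceed along the next side of length $2m+1$ as a chain, and repeat until all four corners and four sides have been folded.

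The main obstacle is verifying that the folds close up consistently around the loop. Locally, at each corner I must choose the variant of L-to-chain (original or mirrored) so that its incoming fold edge matches the last chain fold of the preceding side and its outgoing fold edge matches the first chain fold of the following side. Globally, a parity argument is needed: traveling once around the rectangle, the composition of orientation-flipping contributions from the four sides and four corners must come out to the identity. This is exactly where the $(2n+1) \times (2m+1)$ hypothesis enters, playing the analogous role to Corollary~\ref{corodd}. The difference from Proposition~\ref{propgenus1} is that here, with no cut anywhere, the procedure's ``start'' and ``end'' are genuinely the same fold on the same edge, so consistency must be a strict equality rather than merely the meeting of two free endpoints.

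I expect the hardest step to be pinning down precisely which reflective variant of L-to-chain to install at each corner so that the four local specifications are mutually compatible. Proposition~\ref{LtoChain} gives a single instance; on the rectangle the four corners will almost certainly alternate between the original and its mirror image, and checking that this alternation is globally consistent — and that it fails unless both side dimensions are odd — is the substance of the proof. Once that combinatorial bookkeeping is resolved, the fact that each side of odd length is a chain of odd length (handled by Proposition~\ref{propchain}) plus the no-cut corner moves give a complete algorithm, yielding the claimed alternative folding.
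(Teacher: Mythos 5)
Your plan omits the one idea the paper's proof actually turns on: the perpendicular folds (right triangle dissections, as in Proposition~\ref{propobs}) placed in the \emph{center square of each side}, with the right-angle vertex at the midpoint of the outside edge. These play two roles that nothing in your proposal replaces. First, they are where the cut-free folding \emph{starts}: the paper begins at the middle of each side, where the perpendicular fold frees up half of each adjacent square, and propagates corner folds from there toward the ring's corners, finishing each corner with the fold of Proposition~\ref{LtoChain}. Your procedure instead starts at a corner and folds each side ``as a chain,'' but the chain algorithm of Proposition~\ref{propchain} is initiated by two corner folds at a free end, and a closed ring with no cut has no free ends. Second, and more fatally, by Corollary~\ref{corobs} a right triangle dissection is exactly what \emph{creates} an HV-square in the folded result, while the L-to-chain move of Proposition~\ref{LtoChain} \emph{destroys} one (it turns an $L$ into a straight two-level chain). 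Applying L-to-chain at all four corners and only skew-parallelogram folds along the sides would yield a folded object with no HV-squares at all, which cannot be the original genus-1 ring. In the paper's construction the four corners of the two-level ring are supplied precisely by the four right triangle dissections at the side midpoints, compensating for the four corners straightened out by the L-to-chain moves.

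This also changes where the hypothesis $(2n+1)\times(2m+1)$ enters. You use it for an orientation-parity argument around the loop, in the spirit of Corollary~\ref{corodd}, which belongs to the cut-based algorithm of Proposition~\ref{propgenus1}. In the cut-free construction the oddness is needed so that each side \emph{has} a center square in which to place the right-angle vertex at the midpoint of the outside edge; the paper notes explicitly that for $2n \times 2m$ rings this vertex lands off-center and the resulting two-level polyomino has the wrong side lengths. So the difficulty is not the combinatorial bookkeeping of mirrored variants at the corners: the proposal is missing the central mechanism, and as written the procedure could neither be initiated without a cut nor reproduce the ring's shape.
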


\begin{figure}[H]
\centering
\includegraphics[width=3.5in]{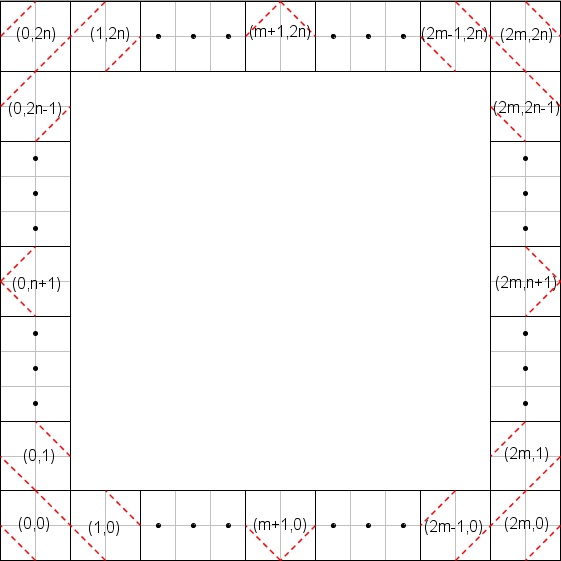}
\caption{}\label{obsfolds}
\end{figure}

\begin{proof} Begin on the middle square of all sides of the polyomino and make perpendicular folds as shown in Proposition~\ref{propobs}; this results in a perpendicular fold in the center of squares $(m+1,0)$, $(0, n+1)$, $(2m, n+1)$, and $(m+1,2n)$. Ensure the vertex of the corner is on the outside edge of each square. Taking a closer look at the polyomino squares between $(0,n+1)$ and $(m+1,0)$, half of $(m,0)$ and $(0,n)$ is available for a corner fold into the adjacent square closer to $(0,0)$. Make these two folds and repeat, moving closer to $(0,0)$. At last, half of cells $(0,1)$ and $(1,0)$ will be left; fold the corner as we folded the corner in the proof of Proposition \ref{LtoChain}. Repeat for the other 3 corners. \end{proof}

Notice that this alternative folding does not work for $2n \times 2m$ well-formed polyominoes of genus 1.  The key component to success in the odd-sided case is that a right triangle dissection created using the folds discussed in Proposition~\ref{propobs} must have its right-angle vertex at the midpoint on the outside edge of the center square.  This ensures that the sides of the resulting two-layer polyomino are exactly the same number of squares as the one-layer polyomino.  In a $2n \times 2m$ polyomino of genus 1, there is no center square on each side and thus the right-angle vertex of the right triangle dissection will be placed off-center, creating a two-layer polyomino with odd side-lengths.  One can fold such a polyomino, but it will be not be a {\it successful fold} in the manner stipulated.

\section*{Conclusion}

This paper provides clear algorithms to fold certain well-formed polyominoes from one level to two, including some very complicated and beautiful polyominoes.  Frederickson claimed that {\it all} well-formed polyominoes can be folded from one level to two \cite[p. 270]{fred2011}, but the systematic method for folding well-formed polyominoes of genus 0 appears to require more than corner folds and half edge cuts, and even allowing a side cut for those of genus greater than 0 is insufficient.  For instance, the authors do not have a way to fold a basic ``U shape'' polyomino as seen in Figure~\ref{Ushape} from one level to two.  

\begin{figure}[H]
\centering
\includegraphics[width=2.5in]{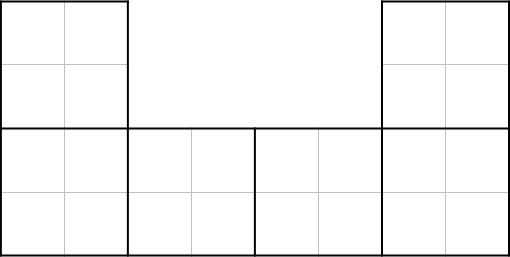}
\caption{}\label{Ushape}
\end{figure}

Furthermore, the matter of non-well-formed polyominoes is an open question.  Frederickson \cite{fred2011} explored certain families of non-well-formed polyominoes, using corner cuts (some that extend across adjacent squares) in addition to corner folds and half edge cuts.  It's straightforward, when $n$ and $k$ are positive integers, that a $(2n+1) \times (2n+1)$ square polyomino (of genus 0) can be folded from one level to two using only corner folds, but a $2n \times 2n$ square polyomino requires diagonal folds to be folded and a $(2n+1) \times (2k+1)$ rectangular polyomino cannot be folded using only corner folds. Beyond those observations, it is not clear which types of folds or cuts one be sufficient or necessary to employ in order to fold a non-well-formed polyomino.

All in all, there is still much work left if we are to understand how to fold polyominoes from one level to two.

\bibliographystyle{plain}
\bibliography{references}

\end{document}